\newcommand{\R}{{\mathbb R}}
\newcommand{\bx}{\mbox{\boldmath{$x$}}}
\newcommand{\bb}{\mbox{\boldmath{$b$}}}
\newcommand{\bu}{\mbox{\boldmath{$u$}}}
\newcommand{\bv}{\mbox{\boldmath{$v$}}}
\newcommand{\be}{\mbox{\boldmath{$e$}}}
\newcommand{\sbx}{\mbox{\boldmath{${\scriptstyle x}$}}}
\newcommand{\bl}{\begin{list}{ \ }{
\leftmargin=.325in}}
\newcommand{\el}{\end{list}}
\begin{document}
\date{}
\title{Some matrix nearness problems \\ suggested by Tikhonov regularization}
\author{
Silvia Noschese\thanks{Dipartimento di Matematica ``Guido Castelnuovo'',
SAPIENZA Universit\`a di Roma, P.le A. Moro, 2, I-00185 Roma, Italy. E-mail:
{\tt noschese@mat.uniroma1.it}. Research supported by a grant from
SAPIENZA Universit\`a di Roma.}\and
Lothar Reichel\thanks{Department of
Mathematical Sciences, Kent State University, Kent, OH 44242, USA.
E-mail: {\tt reichel@math.kent.edu}. Research supported in part by NSF grant
DMS-1115385.}
}

\maketitle

\begin{abstract}
The numerical solution of linear discrete ill-posed problems typically requires 
regularization, i.e., replacement of the available ill-conditioned problem by a nearby 
better conditioned one. The most popular regularization methods for problems of small to
moderate size are Tikhonov regularization and truncated singular value decomposition 
(TSVD). By considering matrix nearness problems related to Tikhonov regularization, 
several novel regularization methods are derived. These methods share properties with both
Tikhonov regularization and TSVD, and can give approximate solutions of higher quality 
than either one of these methods.
\end{abstract}

\begin{keywords}
ill-posed problem, Tikhonov regularization, modified Tikhonov regularization, truncated 
singular value decomposition, 
\end{keywords}

\section{Introduction}\label{sec1}
Consider the computation of an approximate solution of the minimization problem
\begin{equation}\label{linsys}
\min_{{\sbx}\in{\R}^n}\|A{\bx}-{\bb}\|,
\end{equation}
where $\|\cdot\|$ denotes the Euclidean vector norm and $A \in {\R}^{m\times n}$ is a 
matrix whose singular values decay smoothly to zero without a significant gap. In 
particular, $A$ may be singular. Minimization problems (\ref{linsys}) with a matrix of
this kind often are referred to as discrete ill-posed problems. They arise, for example, 
from the discretization of linear ill-posed problems, such as Fredholm integral equations 
of the first kind with a smooth kernel. We will for notational simplicity assume that 
$m\geq n$; however, the methods discussed also can be applied when $m<n$. 

The data vector $\bb\in\R^m$ in linear discrete ill-posed problems that arise in science
and engineering typically is contaminated by an (unknown) error $\be\in\R^m$. We will 
refer to the error $\be$ as ``noise.'' Let $\hat{\bb}\in {\R}^m$ denote the 
(unknown) error-free vector associated with ${\bb}$, i.e., 
\begin{equation}\label{rhs}
{\bb}=\hat{\bb}+{\be}.
\end{equation}
The (unknown) linear system of equations with error-free right-hand side,
\begin{equation}\label{linsysnf}
A{\bx}=\hat{\bb},
\end{equation}
is assumed to be consistent; however, we do not require the least-squares 
problem (\ref{linsys}) to be consistent. 

Let $A^\dag$ denote the Moore--Penrose pseudoinverse of $A$. We are interested in computing
an approximation of the solution $\hat{\bx}=A^\dag\hat{\bb}$ of minimal Euclidean norm of 
the error-free linear system (\ref{linsysnf}) by determining an approximate solution of 
the error-contaminated least-squares problem (\ref{linsys}). Note that the solution of 
(\ref{linsys}),
\begin{equation}\label{xbreve}
\bx=A^\dag{\bb}=A^\dag(\hat{\bb}+{\be})=\hat{\bx}+A^\dag{\be},
\end{equation}
typically is dominated by the propagated error $A^\dag{\be}$ and then is meaningless.

Tikhonov regularization, in its simplest form, seeks to determine a useful approximation 
of $\hat{\bx}$ by replacing the minimization problem (\ref{linsys}) by the penalized 
least-squares problem 
\begin{equation}\label{tikhonov}
\min_{{\sbx}\in {\R}^n}\{\|A {\bx}-{\bb}\|^2+\mu^2 \|{\bx}\|^2\}.
\end{equation}
The scalar $\mu>0$ is a regularization parameter. We are interested in developing 
modifications of this minimization problem by considering certain matrix nearness 
problems.

Solving (\ref{tikhonov}) requires both the determination of a suitable value of $\mu>0$ 
and the computation of the associated solution 
\begin{equation}\label{tiksol}
{\bx}_\mu=(A^TA+\mu^2 I)^{-1}A^T{\bb}
\end{equation}
of (\ref{tikhonov}). Throughout this paper the superscript $^T$ denotes transposition and
$I$ is the identity matrix of appropriate order. We will assume that a bound for the norm 
of the error-vector ${\be}$ is known. Then $\mu$ can be determined with the aid of the 
discrepancy principle; see below for details. 

Another common regularization method for (\ref{linsys}) is truncated singular value
decomposition (TSVD). In this method the $n-k$ smallest singular values of $A$ are set to
zero and the minimal-norm solution of the resulting least-squares problem is computed. The
truncation index $k$ is a regularization parameter, which can be determined, e.g., with 
the discrepancy principle.

The TSVD method generally only dampens high frequencies in the computed solution, while 
Tikhonov regularization (\ref{tikhonov}) dampens all frequencies. A modification of the
Tikhonov minimization problem (\ref{tikhonov}) that generally only dampens high 
frequencies has been described in \cite{FR}. This modification can be derived as the 
solution of a matrix nearness problem. It is the purpose of this paper to describe
several matrix nearness problems that suggest modifications of the Tikhonov minimization
problem (\ref{tikhonov}). Some of these modifications perform particularly well for 
problems (\ref{linsys}) in which the vector $\bb$ is contaminated by colored noise 
dominated by high-frequency components.

This paper is organized as follows. Section \ref{sec2} reviews TSVD and Tikhonov 
regularization, as well as the modified Tikhonov regularization method described in 
\cite{FR}, and introduces new regularization methods suggested by certain matrix nearness
problems. Section \ref{sec3} presents a few computed examples, and Section \ref{sec4} 
contains concluding remarks 
and discusses some extensions. In particular, the discussion of methods in this paper 
assumes the singular value decomposition (SVD) of the matrix $A$ to be available. However,
it is impractical to compute the SVD of large matrices. We comment in Section \ref{sec4} 
on how the methods of this paper can be applied to the solution of large-scale 
least-squares problems (\ref{linsys}).

\section{Old and new regularization methods}\label{sec2}
We first describe the SVD of $A$, then review 
regularization by the TSVD and Tikhonov methods, and finally describe several 
modifications of the Tikhonov minimization problem (\ref{tikhonov}). The SVD of $A$ is a
factorization of the form
\begin{equation}\label{svd}
A=U\Sigma V^T,
\end{equation}
where $U=[{\bu}_1,{\bu}_2,\ldots,{\bu}_m]\in{\R}^{m\times m}$ and 
$V=[{\bv}_1,{\bv}_2,\ldots,{\bv}_n]\in{\R}^{n\times n}$ are orthogonal 
matrices, the superscript $^T$ denotes transposition, and 
\[
\Sigma={\rm diag}[\sigma_1,\sigma_2,\ldots,\sigma_n]\in{\R}^{m\times n}
\]
is a (possibly rectangular) diagonal matrix, whose diagonal entries $\sigma_j\geq 0$ are 
the singular values of $A$. They are ordered according to 
$\sigma_1\geq\sigma_2\geq\ldots\geq\sigma_n$. 

Let $A$ be of rank $\ell\geq 1$. Then (\ref{svd}) can be expressed as 
\begin{equation}\label{svdsum}
A=\sum_{j=1}^\ell \sigma_j{\bu}_j{\bv}_j^T
\end{equation}
with $\sigma_\ell>0$. When the matrix $A$ stems from the discretization of a 
compact operator, such as a Fredholm integral equation of the first kind with a smooth 
kernel, the vectors ${\bv}_j$ and ${\bu}_j$ represent discretizations of singular 
functions that are defined on the domains of the integral operator and its adjoint, 
respectively. These singular functions typically oscillate more with increasing index. The 
representation (\ref{svdsum}) then is a decomposition of $A$ into rank-one matrices 
${\bu}_j{\bv}_j^T$ that are discretizations of products of singular functions that 
oscillate more with increasing index $j$.

\subsection{Regularization by TSVD}
\label{sub2.1}
The Moore--Penrose pseudoinverse of $A$ is given by
\[
A^\dag=\sum_{j=1}^\ell \sigma_j^{-1}{\bv}_j{\bu}_j^T.
\]
The difficulty of solving (\ref{linsys}) without regularization stems from the fact that 
the matrix $A$ has ``tiny'' positive singular values and the computation of the solution 
(\ref{xbreve}) of (\ref{linsys}) involves division by these singular values. This results 
in severe propagation of the error ${\be}$ in ${\bb}$ and of round-off errors introduced 
during the calculations of the computed approximate solution of (\ref{linsys}). 

Regularization by the TSVD method overcomes this difficulty by ignoring the tiny positive 
singular values of $A$. Introduce, for $1\leq k\leq\ell$, the rank-$k$ approximation of 
$A$,
\[
A_k=\sum_{j=1}^k \sigma_j{\bu}_j{\bv}_j^T
\]
with Moore--Penrose pseudoinverse 
\[
A_k^\dag=\sum_{j=1}^k \sigma_j^{-1}{\bv}_j{\bu}_j^T.
\]
The TSVD method yields approximate solutions of (\ref{linsys}) of the form
\begin{equation}\label{TSVDk}
{\bx}_k=A_k^\dag{\bb}=\sum_{j=1}^k \frac{{\bu}_j^T{\bb}}{\sigma_j} {\bv}_j,
\qquad k=1,2,\ldots,\ell.
\end{equation}
It is convenient to use the transformed quantities
\[
\widetilde{\bx}_k=V^T{\bx}_k,\qquad 
\widetilde{\bb}=[\widetilde{b}_1,\widetilde{b}_2,\ldots,\widetilde{b}_m]^T=
U^T{\bb}
\]
in the computations. Thus, we compute
\begin{equation}\label{yk}
\widetilde{\bx}_k=
\left[\frac{\widetilde{b}_1}{\sigma_1},\frac{\widetilde{b}_2}{\sigma_2},\ldots,
\frac{\widetilde{b}_k}{\sigma_k},0,\ldots,0\right]^T
\end{equation}
for a suitable value of $1\leq k\leq\ell$ and then determine the approximate solution 
${\bx}_k=V\widetilde{\bx}_k$ of (\ref{linsys}). 

Let a bound for the norm of the error
\[
\|{\be}\|\leq\varepsilon
\]
in $\bb$ be available. We then can determine a suitable truncation index $k$ by the 
discrepancy principle, i.e., we choose $k$ as small as possible so that 
\begin{equation}\label{discrprinc}
\|A{\bx}_k-{\bb}\|\leq\eta\varepsilon,
\end{equation}
where $\eta\geq 1$ is a user-specified constant independent of $\varepsilon$. Thus, the 
truncation index $k=k_{\varepsilon}$ depends on $\varepsilon$ and generally increases as
$\varepsilon$ decreases. A proof of the convergence of ${\bx}_{k_\varepsilon}$ to 
$\hat{\bx}$ as $\varepsilon\searrow 0$ in a Hilbert space setting is presented in 
\cite{EHN}. It requires $\eta>1$ in (\ref{discrprinc}). In actual computations, we use the
representation
\[
\|A{\bx}_k-{\bb}\|^2=\sum_{j=k+1}^m \widetilde{b}_j^2
\]
to determine $k_\varepsilon$ from (\ref{discrprinc}). Further details on regularization by
the TSVD method can be found in, e.g., \cite{EHN,Ha1}.

\subsection{Standard Tikhonov regularization}\label{sub2.2}
Substituting (\ref{svd}), $\widetilde{\bx}=V^T{\bx}$, and $\widetilde{\bb}=U^T{\bb}$ into
(\ref{tikhonov}) yields the penalized least-squares problem
\[
\min_{\widetilde{\sbx}\in{\R}^n}\{\|\Sigma\widetilde{\bx}-\widetilde{\bb}\|^2
+\mu^2\|\widetilde{\bx}\|^2\}
\]
with solution 
\begin{equation}\label{ymu}
\widetilde{\bx}_\mu=(\Sigma^T\Sigma+\mu^2 I)^{-1}\Sigma^T\widetilde{\bb}
\end{equation}
for any $\mu>0$. The associated solution of (\ref{tikhonov}) is given by 
${\bx}_\mu=V\widetilde{\bx}_\mu$. It satisfies 
\begin{equation}\label{tikh}
(A^TA+\mu^2 I){\bx}_\mu=A^T{\bb}.
\end{equation}

The discrepancy principle prescribes that the regularization parameter $\mu>0$
be determined so that
\begin{equation}\label{xx}
\|A{\bx}_\mu-{\bb}\|=\eta\varepsilon,
\end{equation}
or, equivalently, so that 
\begin{equation}\label{discrprinc2}
\|\Sigma\widetilde{\bx}_\mu-\widetilde{\bb}\|=\eta\varepsilon,
\end{equation}
where $\eta\geq 1$ is a user-chosen constant independent of $\varepsilon$. This nonlinear 
equation for $\mu$ can be solved, e.g., by Newton's method. Generally, $\mu$ decreases 
with $\varepsilon$. A proof of the convergence ${\bx}_\mu\rightarrow\hat{\bx}$ as 
$\varepsilon\searrow 0$ is provided in \cite{EHN}. The proof is in a Hilbert space 
setting and requires that $\eta>1$ in (\ref{xx}). All methods discussed in Subsections
\ref{sub2.3} and \ref{sub2.5} use the value of $\mu$ determined by (\ref{xx}), i.e., 
$\mu>0$ is for all methods chosen so that the solution ${\bx}_\mu$ of (\ref{tikhonov}) 
satisfies (\ref{xx}).

\subsection{Modified Tikhonov regularization}\label{sub2.3}
It follows from (\ref{ymu}) that Tikhonov regularization with $\mu>0$ dampens all solution
components ${\bv}_j$ of ${\bx}_\mu$. On the other hand, TSVD does not dampen any solution 
component that is not set to zero; cf. (\ref{yk}). It is well known that Tikhonov 
regularization may oversmooth the computed solution when the regularization parameter is 
determined by the discrepancy principle; see  Hansen \cite[\S 7.2]{Ha1}. A more recent 
discussion on the oversmoothing of the solution (\ref{tiksol}) obtained with Tikhonov
regularization is provided by Klann and Ramlau \cite{KR}. 

In order to reduce the oversmoothing, it was suggested in \cite{FR} that the minimization 
problem (\ref{tikhonov}) be replaced by 
\begin{equation}\label{tikh2}
\min_{{\sbx}\in {\R}^n}\{\|A {\bx}-{\bb}\|^2+ \|L_\mu{\bx}\|^2\},
\end{equation}
where 
\begin{equation}\label{Lnew}
L_\mu=D_\mu V^T
\end{equation}
and 
\[
D_\mu^2={\rm diag}\left[\max\{\mu^2-\sigma_1^2,0\},\max\{\mu^2-\sigma_2^2,0\},
\ldots,\max\{\mu^2-\sigma_n^2,0\}\right].
\]
Thus, the elements of $D_\mu$, and therefore of $L_\mu$, are nonlinear functions of 
$\mu\geq 0$. Analogously to (\ref{ymu}), one has
\begin{equation}\label{ymu2}
\widetilde{\bx}_\mu=(\Sigma^T\Sigma+D_\mu^2)^{-1}\Sigma^T\widetilde{\bb}.
\end{equation}

We determine $\mu\geq 0$ so that the solution (\ref{tiksol}) of standard Tikhonov 
regularization (\ref{tikhonov}) satisfies the discrepancy principle (\ref{xx}). If
$\mu\geq\sigma_1$, then 
\[
\Sigma^T\Sigma+D_\mu^2=\mu^2 I.
\]
If, instead, $0\leq\mu<\sigma_1$, then there is $1\leq k\leq n$ such that 
$\sigma_k>\mu\geq\sigma_{k+1}$, where we define $\sigma_{n+1}=0$ when $k=n$. These
values of $\mu$ and $k$ yield
\[
\Sigma^T\Sigma+D_\mu^2=
{\rm diag}\left[\sigma_1^2,\sigma_2^2,\ldots,\sigma_k^2,\mu^2,\ldots,\mu^2
\right]\in{\R}^{n\times n}.
\]

We will in the remainder of this section assume that $k\geq 1$. When $\mu>0$, the above 
matrix is positive definite and the solution (\ref{ymu2}) exists and is unique. The 
corresponding approximate solution of (\ref{linsys}) is given by 
${\bx}_\mu=V\widetilde{\bx}_\mu$ and satisfies 
\begin{equation}\label{X.X}
(A^TA+L_\mu^TL_\mu){\bx}=A^T{\bb}.
\end{equation}

To avoid severe propagation of the error ${\be}$ in ${\bb}$ into the solution of 
(\ref{X.X}), the matrix $A^TA+L_\mu^TL_\mu$ should not be too ill-conditioned. This can be 
achieved by letting $\mu>0$ be sufficiently large. We measure the conditioning of a matrix 
by its spectral condition number $\kappa_2$, which is defined as the ratio of the 
largest and smallest positive singular values of the matrix. For instance,
\begin{eqnarray}
\nonumber
\kappa_2(A_k)&=&\frac{\sigma_1}{\sigma_k},\qquad 1\leq k\leq\ell,\\
\label{condstd}
\kappa_2(A^TA+\mu^2 I)&=&\frac{\sigma_1^2+\mu^2}{\sigma_n^2+\mu^2},\\ 
\label{condmod}
\kappa_2(A^TA+L_\mu^TL_\mu)&=&\frac{\sigma_1^2}{\mu^2},\qquad \sigma_n\leq\mu<\sigma_1.
\end{eqnarray}

It is desirable that the matrix $L_\mu^TL_\mu$ be of small norm so that 
equation (\ref{X.X}) is fairly close to the normal equations $A^TA\bx=A^T\bb$ 
associated with (\ref{linsys}), because this may help us determine an accurate 
approximation of $\hat{\bx}$. Indeed, the matrix $L_\mu^T L_\mu$ can be shown to be the
closest matrix to $A^TA$ in the Frobenius norm with the property that its smallest 
singular value is $\mu^2$; see \cite[Theorem 2.1 and Corollary 2.2]{FR}. We recall that
the Frobenius norm of a matrix $M\in{\R}^{n\times n}$ is given by 
$\|M\|_F=\sqrt{{\rm trace}(M^TM)}$.

\subsection{Filter factors}
Properties of regularization methods can be studied with the aid of filter factors; see,
e.g., Hansen \cite{Ha1} and Donatelli and Serra--Capizzano \cite{DSC} for illustrations.
The unregularized solution (\ref{xbreve}) can be expressed as
\[
{\bx}=\sum_{j=1}^\ell \frac{{\bu}_j^T{\bb}}{\sigma_j} {\bv}_j.
\]
The filter factors show how the components are modified by a regularization method. For 
instance, we can express the TSVD solution (\ref{TSVDk}) as 
\[
{\bx}_k=\sum_{j=1}^\ell \varphi_{k,j}^{({\rm TSVD})}
\frac{{\bu}_j^T{\bb}}{\sigma_j} {\bv}_j
\]
with the filter factors
\[
\varphi_{k,j}^{({\rm TSVD})}=\left\{\begin{array}{cc} 1,~~&~~1\leq j\leq k, \\
                       0,~~&~~k< j\leq\ell. \end{array}\right.
\]
Similarly, the Tikhonov solution of (\ref{tikh}) can be written as 
\[
{\bx}_\mu=\sum_{j=1}^\ell \varphi_{\mu,j}^{({\rm Tikhonov})}
\frac{{\bu}_j^T{\bb}}{\sigma_j} {\bv}_j
\]
with the filter factors
\[
\varphi_{\mu,j}^{({\rm Tikhonov})}=\frac{\sigma_j^2}{\sigma_j^2+\mu^2}, 
\qquad 1\leq j\leq\ell.
\]

Let $\mu>0$ and assume that $k$ is such that $\sigma_k>\mu\geq\sigma_{k+1}$, where we
define $\sigma_{n+1}=0$ if $k=n$. The solution of the modified Tikhonov regularization
method (\ref{tikh2}) can be expressed as
\[
{\bx}_\mu=\sum_{j=1}^\ell \varphi_{\mu,j} \frac{{\bu}_j^T{\bb}}{\sigma_j} {\bv}_j
\]
with the filter factors
\[
\varphi_{\mu,j}^{}=\left\{\begin{array}{cc} 1,~~&~~1\leq j\leq k, \\
       \displaystyle{\frac{\sigma_j^2}{\mu^2}},~~&~~k< j\leq\ell.
		       \end{array}\right.
\]
Thus, these filter factors are the same as $\varphi_{k,j}^{({\rm TSVD})}$ for 
$1\leq j\leq k$, and close to $\varphi_{\mu,j}^{({\rm Tikhonov})}$ for $k< j\leq\ell$.

\subsection{New modified Tikhonov regularization methods}\label{sub2.5}
This section derives new modifications of Tikhonov regularization (\ref{tikhonov}) by
focusing on condition numbers. 
For all methods of this subsection, we determine $\mu\geq 0$ similarly as in Subsection 
\ref{sub2.3}, i.e., so that the solution (\ref{tiksol}) of (\ref{tikhonov}) satisfies 
(\ref{xx}). Then $k$ is chosen as a function of $\mu$ as described.

\begin{proposition}\label{prop0}
Let $L_\mu$ be defined by (\ref{Lnew}) and assume that $\sigma_n \leq \mu \leq \sigma_1$.
Then
\begin{equation}\label{cond2.9}
\max\{\kappa_2(A^TA+L_\mu^TL_\mu),\kappa_2(A^TA+\mu^2 I)\}\leq \kappa_2(A^TA).
\end{equation}
Moreover,
\begin{equation}\label{cond3} 
\kappa_2(A^TA+L_\mu^TL_\mu) \leq \kappa_2(A^TA+\mu^2 I) \Leftrightarrow 
\mu^2\geq\sigma_1\sigma_n.
\end{equation}
\end{proposition}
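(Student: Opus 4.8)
The plan is to reduce everything to explicit formulas for the two condition numbers and then compare them directly. From the discussion preceding the proposition we already have the key identities: $\kappa_2(A^TA+\mu^2 I) = (\sigma_1^2+\mu^2)/(\sigma_n^2+\mu^2)$, and, under the hypothesis $\sigma_n\le\mu\le\sigma_1$ (so that the index $k$ with $\sigma_k>\mu\ge\sigma_{k+1}$ satisfies $1\le k\le n-1$, or the boundary cases), the matrix $A^TA+L_\mu^TL_\mu$ equals $\mathrm{diag}[\sigma_1^2,\dots,\sigma_k^2,\mu^2,\dots,\mu^2]$, whose largest positive singular value is $\sigma_1^2$ and whose smallest is $\min\{\sigma_k^2,\mu^2\}=\mu^2$ (since $\sigma_k^2>\mu^2\ge$ the trailing diagonal entries), giving $\kappa_2(A^TA+L_\mu^TL_\mu)=\sigma_1^2/\mu^2$ as in (\ref{condmod}). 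I would state these two formulas at the outset, handling the degenerate boundary cases $\mu=\sigma_1$ and $\mu=\sigma_n$ briefly (when $\mu=\sigma_1$ both sides of the equivalence hold with equality-type reasoning; when $\mu=\sigma_n$, $\kappa_2(A^TA+\mu^2I)=\kappa_2(A^TA)/1$... so care is needed, but the inequalities degenerate cleanly).

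For inequality (\ref{cond2.9}): the bound $\kappa_2(A^TA)=\sigma_1^2/\sigma_n^2$ dominates $\kappa_2(A^TA+\mu^2 I)$ because adding the same positive quantity $\mu^2$ to numerator and denominator of a fraction exceeding $1$ decreases it — this is the elementary fact that $t\mapsto(\sigma_1^2+t)/(\sigma_n^2+t)$ is decreasing for $t\ge 0$ when $\sigma_1\ge\sigma_n$. For the other term, $\kappa_2(A^TA+L_\mu^TL_\mu)=\sigma_1^2/\mu^2\le\sigma_1^2/\sigma_n^2=\kappa_2(A^TA)$ follows immediately from $\mu\ge\sigma_n$. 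So (\ref{cond2.9}) is essentially immediate once the formulas are in hand.

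For the equivalence (\ref{cond3}): I would write $\kappa_2(A^TA+L_\mu^TL_\mu)\le\kappa_2(A^TA+\mu^2 I)$ as
\[
\frac{\sigma_1^2}{\mu^2}\le\frac{\sigma_1^2+\mu^2}{\sigma_n^2+\mu^2},
\]
cross-multiply (all quantities positive) to get $\sigma_1^2(\sigma_n^2+\mu^2)\le\mu^2(\sigma_1^2+\mu^2)$, i.e. $\sigma_1^2\sigma_n^2+\sigma_1^2\mu^2\le\sigma_1^2\mu^2+\mu^4$, which simplifies to $\sigma_1^2\sigma_n^2\le\mu^4$, i.e. $\mu^2\ge\sigma_1\sigma_n$ (taking positive square roots). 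Each step is reversible, so this is an equivalence.

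The only real subtlety — and the place I would be most careful — is justifying the closed form $\kappa_2(A^TA+L_\mu^TL_\mu)=\sigma_1^2/\mu^2$ uniformly over the stated range of $\mu$, in particular confirming that the smallest positive diagonal entry of $\mathrm{diag}[\sigma_1^2,\dots,\sigma_k^2,\mu^2,\dots,\mu^2]$ is indeed $\mu^2$ and not some $\sigma_j^2$ with $j\le k$ (it cannot be, since $\sigma_j^2\ge\sigma_k^2>\mu^2$ for $j\le k$), and that $\mu>0$ throughout so no division-by-zero issue arises (guaranteed by $\mu\ge\sigma_n$ together with the standing assumption $k\ge 1$, hence $\sigma_n=\sigma_{\ell}$-type positivity when $A$ has full rank; if $A$ is rank-deficient one restricts to the positive singular values, and the hypothesis $\sigma_n\le\mu$ should be read accordingly). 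Everything else is routine algebra on rational functions of $\mu$.
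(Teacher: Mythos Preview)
Your proposal is correct and follows the same approach as the paper, which simply states that both (\ref{cond2.9}) and (\ref{cond3}) follow from the explicit formulas (\ref{condstd}) and (\ref{condmod}). You have merely spelled out the elementary algebra that the paper leaves to the reader.
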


\begin{proof}
The proofs of the inequalities (\ref{cond2.9}) and (\ref{cond3}) follow from 
(\ref{condstd}) and (\ref{condmod}). The requirement on $\mu^2$ in (\ref{cond3}) typically
is satisfied for linear discrete ill-posed problems that arise in applications.
\end{proof}

We discuss Tikhonov regularization for several regularization matrices that are 
modifications of  $\mu I$ and yield condition numbers of the associated normal equations 
that are smaller than the condition number (\ref{condstd}) of the matrix $A^TA+\mu^2 I$.
We first consider the regularization matrix
\begin{equation}\label{Lnew2}
L_{\mu,k}=D_{\mu,k} V^T
\end{equation}
with 
\[
D_{\mu,k}={\rm diag}\left[0,0,
\ldots,0,\overbrace{\mu,\ldots,\mu}^{n-k}\right].
\]
Given $\mu\geq 0$, the index $k=k_\mu$ is chosen so that the diagonal entries of 
\[
\Sigma^T\Sigma+D_{\mu,k}^2={\rm diag}\left[\sigma_1^2,\sigma_2^2,\ldots,\sigma_k^2,
\sigma_{k+1}^2+\mu^2,\ldots,\sigma_{n}^2+\mu^2 \right],
\]
are non-increasing when the column index increases. Thus, the regularization matrix 
(\ref{Lnew2}) leaves the largest $k$ eigenvalues of $A^TA$ invariant and shifts the
remaining ones.

\begin{proposition}\label{prop3.1}
Let $L_\mu$ and $L_{\mu,k}$ be defined by (\ref{Lnew}) and (\ref{Lnew2}), respectively,
and assume that $k=k_\mu$ in $L_{\mu,k}$ is chosen as described above. Then
\[
\kappa_2(A^TA+L_{\mu,k}^TL_{\mu,k})=\frac{\sigma_1^2}{\sigma_n^2+\mu^2}.
\]
Therefore 
\begin{equation}\label{cond5}
\kappa_2(A^TA+L_{\mu,k}^TL_{\mu,k}) \leq \kappa_2(A^TA+\mu^2 I)\Leftrightarrow \mu^2\ne 0
\end{equation}
and
\begin{equation}\label{cond6}
\kappa_2(A^TA+L_{\mu,k}^TL_{\mu,k}) \leq \kappa_2(A^TA+L_\mu^TL_\mu),  
\end{equation}
where the latter inequality is strict  if and only if $A$ is of full rank. Moreover,
for $k\geq 1$,
\begin{equation}\label{prp3}
\|L_{\mu,k}\|_F<\|\mu I\|_F.
\end{equation}
\end{proposition}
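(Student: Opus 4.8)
The plan is to verify the three claims in Proposition~\ref{prop3.1} in turn, all by direct computation using the fact that $L_{\mu,k}=D_{\mu,k}V^T$ leaves $V^TV=I$ alone, so that $A^TA+L_{\mu,k}^TL_{\mu,k}=V(\Sigma^T\Sigma+D_{\mu,k}^2)V^T$ is orthogonally similar to the displayed diagonal matrix. Its positive singular values are therefore exactly the diagonal entries $\sigma_1^2,\dots,\sigma_k^2,\sigma_{k+1}^2+\mu^2,\dots,\sigma_n^2+\mu^2$ (discarding any that vanish). First I would record that, by the choice $k=k_\mu$, these entries are arranged in non-increasing order, so the largest positive one is $\sigma_1^2$ and the smallest positive one is $\sigma_n^2+\mu^2$ when $\mu>0$ (and $\sigma_\ell^2$ when $\mu=0$, in which case the statements reduce to trivialities or are vacuous under the standing assumption $k\ge 1$ and $\mu>0$ used throughout the subsection). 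This immediately gives the claimed formula $\kappa_2(A^TA+L_{\mu,k}^TL_{\mu,k})=\sigma_1^2/(\sigma_n^2+\mu^2)$.

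Given that formula, the equivalence (\ref{cond5}) is immediate from (\ref{condstd}): comparing $\sigma_1^2/(\sigma_n^2+\mu^2)$ with $(\sigma_1^2+\mu^2)/(\sigma_n^2+\mu^2)$, the denominators agree and the numerators satisfy $\sigma_1^2\le\sigma_1^2+\mu^2$ with equality precisely when $\mu^2=0$; hence the inequality holds, and is an equality iff $\mu=0$, which is the stated equivalence. For (\ref{cond6}) I would split on the rank of $A$. If $A$ has full rank, then $\sigma_n>0$, so $\sigma_n\le\mu<\sigma_1$ puts us in the regime of (\ref{condmod}), giving $\kappa_2(A^TA+L_\mu^TL_\mu)=\sigma_1^2/\mu^2$; since $\mu^2<\mu^2+\sigma_n^2$ when $\sigma_n>0$, we get the strict inequality $\sigma_1^2/(\sigma_n^2+\mu^2)<\sigma_1^2/\mu^2$. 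If $A$ is rank-deficient, then $\sigma_n=0$, and one checks from the definitions that the two shifted spectra $\Sigma^T\Sigma+D_\mu^2$ and $\Sigma^T\Sigma+D_{\mu,k}^2$ have the same set of positive values (both replace the vanishing tail eigenvalues by $\mu^2$ and leave $\sigma_1^2$ as the top), so the two condition numbers coincide; this is why the inequality is strict exactly when $A$ is of full rank.

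For the Frobenius-norm bound (\ref{prp3}), note that $\|L_{\mu,k}\|_F=\|D_{\mu,k}V^T\|_F=\|D_{\mu,k}\|_F$ because $V^T$ is orthogonal, and $\|D_{\mu,k}\|_F^2=(n-k)\mu^2$ while $\|\mu I\|_F^2=n\mu^2$. Since $k=k_\mu\ge 1$ by hypothesis and $\mu>0$, we have $(n-k)\mu^2<n\mu^2$, which is the claim. The one point warranting a sentence of care is the verification that the choice of $k=k_\mu$ described before the proposition does force the diagonal of $\Sigma^T\Sigma+D_{\mu,k}^2$ to be non-increasing, and in particular that such a $k$ exists and is unique; this is what makes the "largest positive entry is $\sigma_1^2$, smallest is $\sigma_n^2+\mu^2$" identification legitimate, and it is really the only step that is more than a one-line computation. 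I expect this bookkeeping about the ordering of the shifted eigenvalues — rather than any inequality manipulation — to be the main (and very mild) obstacle.
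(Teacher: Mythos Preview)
Your proposal is correct and follows exactly the paper's approach; the paper's own proof is in fact far terser (it simply declares (\ref{cond5}) and (\ref{cond6}) ``immediate'' and records $\|L_{\mu,k}\|_F^2=\|D_{\mu,k}\|_F^2=(n-k)\mu^2$), so your write-up merely fills in the details it omits. One small imprecision worth cleaning up: in the rank-deficient case you assert that the two shifted spectra $\Sigma^T\Sigma+D_\mu^2$ and $\Sigma^T\Sigma+D_{\mu,k}^2$ ``have the same set of positive values,'' which is not literally true (the intermediate entries $\sigma_j^2+\mu^2$ need not equal $\mu^2$), but your conclusion is unaffected since only the largest and smallest positive entries enter $\kappa_2$, and those do coincide when $\sigma_n=0$.
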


\begin{proof}
The proofs of (\ref{cond5}) and (\ref{cond6}) are immediate. The inequality (\ref{prp3}) 
follows from the observation that 
\[
\|L_{\mu,k}\|_F^2=\|D_{\mu,k}\|_F^2=(n-k)\mu^2.
\]
\end{proof}

The filter factors for Tikhonov regularization with the regularization matrix 
(\ref{Lnew2}) are given by
\begin{equation}\label{phimukj}
\varphi_{\mu,k,j}=\left\{\begin{array}{cc} 1,~~&~~1\leq j\leq k, \\
       \displaystyle{\frac{\sigma_j^2}{\sigma_j^2+\mu^2}},~~&~~k< j\leq\ell.
		       \end{array}\right.
\end{equation}
Thus, these filter factors are the same as $\varphi_{k,j}^{({\rm TSVD})}$ for 
$1\leq j\leq k$, and the same as $\varphi_{\mu,j}^{({\rm Tikhonov})}$ for $k< j\leq\ell$.
However, the discrepancy principle applied to TSVD, cf. (\ref{discrprinc}), may yield a 
different value of $k$.

We are lead to an alternative to the regularization matrix (\ref{Lnew2}) when we instead
of shifting the smallest eigenvalues of $A^TA$ ignore them. Define the 
regularization matrix
\begin{equation}\label{Lnew3}
L_k=D_k V^T
\end{equation}
with 
\[
D_k^2={\rm diag}\left[0,0,
\ldots,0,-\sigma_{k+1}^2, \dots, -\sigma_{n}^2\right].
\]
Then
\[
\Sigma^T\Sigma+D_k^2=\Sigma_k^T\Sigma_k=
{\rm diag}\left[\sigma_1^2,\sigma_2^2,\ldots,\sigma_k^2,0,0,\ldots,0\right].
\] 

\begin{proposition}\label{prop3.2}
Let the regularization matrices $L_\mu$, $L_{\mu,k}$, and $L_k$ be defined by 
(\ref{Lnew}), (\ref{Lnew2}), and (\ref{Lnew3}), respectively. Then
\[
\kappa_2(A^TA+L_k^TL_k)=\kappa_2(A_{k}^TA_{k})=\frac{\sigma_1^2}{\sigma_k^2}.
\]
Therefore,
\begin{equation}\label{cond7}
\kappa_2(A^TA+L_{k}^TL_{k}) \leq \kappa_2(A^TA+L_{\mu}^TL_{\mu}) \Leftrightarrow 
\mu \leq \sigma_k
\end{equation}
and
\begin{equation}\label{cond8}
\kappa_2(A^TA+L_{k}^TL_{k}) \leq \kappa_2(A^TA+L_{\mu,k}^TL_{\mu,k}) \Leftrightarrow
\mu \leq \sqrt{\sigma_k^2-\sigma_n^2}.
\end{equation}
Moreover, if  $\sigma_{k+1}\leq\mu<\sigma_k$, then
\begin{equation}\label{cut}
\|L_k\|_F\leq \|L_{\mu,k}\|_F.
\end{equation}
\end{proposition}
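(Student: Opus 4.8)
The plan is to reduce each assertion to an orthogonal diagonalization followed by elementary manipulation of diagonal entries, exactly in the spirit of the proofs of Propositions~\ref{prop0} and \ref{prop3.1}.

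First I would verify the displayed identity for $\kappa_2(A^TA+L_k^TL_k)$. Since $L_k=D_kV^T$ with $V$ orthogonal, $L_k^TL_k=VD_k^2V^T$, so
\[
A^TA+L_k^TL_k=V(\Sigma^T\Sigma+D_k^2)V^T
=V\,{\rm diag}\!\left[\sigma_1^2,\ldots,\sigma_k^2,0,\ldots,0\right]V^T ,
\]
an orthogonal similarity. Hence the positive singular values of this symmetric positive semidefinite matrix are $\sigma_1^2\ge\cdots\ge\sigma_k^2>0$ (here $k\le\ell$ is needed so that $\sigma_k>0$), and their ratio is $\sigma_1^2/\sigma_k^2$. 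The identical computation applied to $A_k^TA_k=V\Sigma_k^T\Sigma_kV^T$ returns the same value, which gives the first equation of the proposition.

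For (\ref{cond7}) and (\ref{cond8}) I would simply combine this with the closed forms already in hand: (\ref{condmod}) gives $\kappa_2(A^TA+L_\mu^TL_\mu)=\sigma_1^2/\mu^2$, and Proposition~\ref{prop3.1} gives $\kappa_2(A^TA+L_{\mu,k}^TL_{\mu,k})=\sigma_1^2/(\sigma_n^2+\mu^2)$, each on the $\mu$-range where it is stated. Cancelling the common positive factor $\sigma_1^2$ and inverting turns $\kappa_2(A^TA+L_k^TL_k)\le\kappa_2(A^TA+L_\mu^TL_\mu)$ into $\mu^2\le\sigma_k^2$, i.e.\ (\ref{cond7}); likewise it turns $\kappa_2(A^TA+L_k^TL_k)\le\kappa_2(A^TA+L_{\mu,k}^TL_{\mu,k})$ into $\sigma_n^2+\mu^2\le\sigma_k^2$, i.e.\ (\ref{cond8}). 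Finally, for (\ref{cut}) I would argue as for (\ref{prp3}): orthogonality of $V$ gives $\|L_k\|_F^2=\|D_k\|_F^2=\sum_{j=k+1}^n\sigma_j^2$ and $\|L_{\mu,k}\|_F^2=\|D_{\mu,k}\|_F^2=(n-k)\mu^2$; the hypothesis $\sigma_{k+1}\le\mu$ together with the monotonicity $\sigma_j\le\sigma_{k+1}$ for $j>k$ yields $\sum_{j=k+1}^n\sigma_j^2\le(n-k)\sigma_{k+1}^2\le(n-k)\mu^2$, and taking square roots gives (\ref{cut}).

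The calculations are routine, so I do not expect a genuine obstacle; the only care needed is bookkeeping — ensuring $\sigma_k>0$ (i.e.\ $k\le\ell$) so that $\kappa_2(A^TA+L_k^TL_k)$ is well defined, and invoking (\ref{condmod}) and Proposition~\ref{prop3.1} only on the $\mu$-intervals for which those identities hold, since outside those intervals the equivalences can break down.
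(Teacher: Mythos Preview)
Your proposal is correct and follows essentially the same route as the paper's proof: the paper declares (\ref{cond7}) and (\ref{cond8}) ``straightforward'' and proves (\ref{cut}) via the single chain $\|L_k\|_F^2=\|D_k\|_F^2=\sum_{\sigma_j^2\le\mu^2}\sigma_j^2\le(n-k)\mu^2=\|D_{\mu,k}\|_F^2=\|L_{\mu,k}\|_F^2$, which is exactly your term-by-term bound $\sigma_j^2\le\mu^2$ for $j>k$ rewritten. Your extra care about the $\mu$-range for (\ref{condmod}) and Proposition~\ref{prop3.1} and about $k\le\ell$ is appropriate bookkeeping that the paper leaves implicit.
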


\begin{proof}
The inequalities (\ref{cond7}) and (\ref{cond8}) are straightforward. Property (\ref{cut})
follows from 
\[
\|L_k\|_F^2=\|D_k\|_F^2=\sum_{\sigma_j^2\leq\mu^2}\sigma_j^2\leq
(n-k)\mu^2=\|D_{\mu,k}\|_F^2=\|L_{\mu,k}\|_F^2.
\]
\end{proof}

The filter factors for Tikhonov regularization with the regularization matrix 
(\ref{Lnew3}) are the same as $\varphi_{k,j}^{({\rm TSVD})}$.

The observations at the end of Subsection \ref{sub2.3} suggest that we seek to determine 
regularization matrices that give normal equations with the same condition number as 
$A^T A +\mu^2 I$ but have smaller Frobenius norm than $\mu I$. Introduce the regularization 
matrix
\begin{equation}\label{Lnew4}
\widetilde L_\mu=\widetilde D_\mu V^T
\end{equation}
with 
\[
\widetilde D_\mu^2=\frac{\mu^2}{\sigma_1^2+\mu^2}\,{\rm diag}\left[0,\sigma_1^2-\sigma_2^2,
\ldots,\sigma_1^2-\sigma_n^2\right].
\]
Then
\begin{equation}\label{D1}
\Sigma^T\Sigma+\widetilde D_\mu^2={\rm diag}\left[\sigma_1^2,
\frac{\sigma_1^2}{\sigma_1^2+\mu^2}(\sigma_2^2+\mu^2),\ldots,
\frac{\sigma_1^2}{\sigma_1^2+\mu^2}(\sigma_n^2+\mu^2) \right].
\end{equation}

\begin{proposition}\label{prop3.3}
Let $\widetilde L_\mu$ be given by (\ref{Lnew4}). Then
\begin{equation}\label{cd1}
\kappa_2(A^TA+\widetilde L_\mu^T \widetilde L_\mu)=\kappa_2(A^TA+\mu^2 I)
\end{equation}
and
\begin{equation}\label{cd2}
\|\widetilde L_\mu\|_F<\|\mu I\|_F.
\end{equation}
\end{proposition}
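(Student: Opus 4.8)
The plan is to read both assertions directly off the explicit diagonal representation (\ref{D1}). Since $A^TA+\widetilde L_\mu^T\widetilde L_\mu=V(\Sigma^T\Sigma+\widetilde D_\mu^2)V^T$ with $V$ orthogonal and $\Sigma^T\Sigma+\widetilde D_\mu^2$ diagonal and (for $\mu>0$) positive definite, the singular values of $A^TA+\widetilde L_\mu^T\widetilde L_\mu$ are exactly the diagonal entries listed in (\ref{D1}), so $\kappa_2(A^TA+\widetilde L_\mu^T\widetilde L_\mu)$ is the quotient of the largest and the smallest of them. The first entry $\sigma_1^2>0$ because $A$ has rank at least one, and for $j\geq2$ the entry $\frac{\sigma_1^2}{\sigma_1^2+\mu^2}(\sigma_j^2+\mu^2)$ is positive as well when $\mu>0$, so everything below is legitimate.

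For (\ref{cd1}) I would first note that the first diagonal entry in (\ref{D1}) can be rewritten as $\frac{\sigma_1^2}{\sigma_1^2+\mu^2}(\sigma_1^2+\mu^2)$, so all $n$ diagonal entries have the common form $\frac{\sigma_1^2}{\sigma_1^2+\mu^2}(\sigma_j^2+\mu^2)$ for $j=1,\ldots,n$. This is an increasing function of $\sigma_j^2$, hence, using $\sigma_1\geq\sigma_j\geq\sigma_n$, the largest such entry is $\sigma_1^2$ (attained at $j=1$) and the smallest is $\frac{\sigma_1^2}{\sigma_1^2+\mu^2}(\sigma_n^2+\mu^2)$ (attained at $j=n$). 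Their ratio is $\frac{\sigma_1^2+\mu^2}{\sigma_n^2+\mu^2}$, which equals $\kappa_2(A^TA+\mu^2 I)$ by (\ref{condstd}); this is (\ref{cd1}).

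For (\ref{cd2}) I would use that $\|\widetilde L_\mu\|_F=\|\widetilde D_\mu\|_F$ because $V$ is orthogonal, together with the explicit form of $\widetilde D_\mu^2$, which gives
\[
\|\widetilde L_\mu\|_F^2=\frac{\mu^2}{\sigma_1^2+\mu^2}\sum_{j=2}^n(\sigma_1^2-\sigma_j^2).
\]
Bounding $\sigma_1^2-\sigma_j^2\leq\sigma_1^2$ in each of the $n-1$ terms yields $\|\widetilde L_\mu\|_F^2\leq\frac{(n-1)\sigma_1^2}{\sigma_1^2+\mu^2}\,\mu^2<(n-1)\mu^2<n\mu^2=\|\mu I\|_F^2$, where the first strict inequality uses $\mu>0$. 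Taking square roots gives (\ref{cd2}).

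I do not expect a genuine obstacle here: the only points that need a word of care are confirming that $\sigma_1^2$ is indeed the \emph{largest} diagonal entry in (\ref{D1}) — so that it is the bottom entry, not the top one, that governs the condition number — and observing that the strict inequality in (\ref{cd2}) relies on $\mu>0$ (for $\mu=0$ both sides of (\ref{cd2}) vanish and both sides of (\ref{cd1}) collapse to $\kappa_2(A^TA)$).
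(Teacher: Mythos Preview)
Your proof is correct and follows essentially the same approach as the paper: the paper also derives (\ref{cd1}) directly from the diagonal entries in (\ref{D1}) and proves (\ref{cd2}) via the identical chain $\|\widetilde L_\mu\|_F^2=\|\widetilde D_\mu\|_F^2=\frac{\mu^2}{\sigma_1^2+\mu^2}\sum_{i=2}^{n}(\sigma_1^2-\sigma_i^2)<(n-1)\mu^2<\|\mu I\|_F^2$. Your version simply spells out a few more details (the rewriting of the first diagonal entry, the intermediate bound $\sigma_1^2-\sigma_j^2\leq\sigma_1^2$, and the role of $\mu>0$) that the paper leaves implicit.
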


\begin{proof}
The equality (\ref{cd1}) follows from (\ref{D1}). The inequality (\ref{cd2}) is a 
consequence of 
\[
\|\widetilde L_\mu\|_F^2=\|\widetilde D_\mu\|_F^2=\frac{\mu^2}{\sigma_1^2+\mu^2}
\sum_{i=2}^{n}(\sigma_1^2-\sigma_i^2) <(n-1)\mu^2<\|\mu I\|_F^2.
\]
\end{proof}

The filter factors for Tikhonov regularization with the regularization matrix 
(\ref{Lnew4}) are given by
\[
\widetilde\varphi_{\mu,j}=
\frac{\sigma_j^2(\sigma_1^2+\mu^2)}{\sigma_1^2(\sigma_j^2+\mu^2)},
\qquad 1\leq j\leq\ell.
\]
Thus, these filter factors are the same as $\varphi_{k,j}^{({\rm TSVD})}$
for $j=1$, and close to $\varphi_{\mu,j}^{({\rm Tikhonov})}$ for $1<j\leq\ell$. 
Specifically, 
\[
\widetilde \varphi_{\mu,j}=\frac{(\sigma_1^2+\mu^2)}
{\sigma_1^2}\varphi_{\mu,j}^{({\rm Tikhonov})},\qquad 1<j\leq\ell.
\]

Another regularization matrix that also yields regularized normal equations with the same
spectral condition number as $A^T A +\mu^2 I$ is given by 
\begin{equation}\label{Lnew5}
\widetilde L_{\mu,k}=\widetilde D_{\mu,k} V^T
\end{equation}
with 
\[
\widetilde D_{\mu,k}^2=\frac{\mu^2}{\sigma_1^2+\mu^2}\,{\rm diag}\left[0,\dots, 0,\sigma_1^2-\sigma_{k+1}^2,
\ldots,\sigma_1^2-\sigma_n^2\right].
\]
Then
\begin{equation}\label{eval5}
~~~~~~~~~\Sigma^T\Sigma+\widetilde D_{\mu,k}^2=
{\rm diag}\left[\sigma_1^2,\ldots, \sigma_k^2, \frac{\sigma_1^2}{\sigma_1^2+\mu^2}(\sigma_{k+1}^2+\mu^2),\ldots,\frac{\sigma_1^2}{\sigma_1^2+\mu^2}(\sigma_n^2+\mu^2)
\right].
\end{equation}
The index $k=k_\mu$ is chosen so that the diagonal entries of 
$\Sigma^T\Sigma+\widetilde D_{\mu,k}^2$ are nonincreasing. The following results are 
analogous to those of Proposition \ref{prop3.3}.

\begin{proposition}\label{prop3.4}
Let the matrix $\widetilde L_{\mu,k}$ be defined by (\ref{Lnew5}) with the index $k=k_\mu$
chosen as indicated above. Then
\begin{equation}\label{cd3}
\kappa_2(A^TA+\widetilde L_{\mu,k}^T \widetilde L_{\mu,k})=\kappa_2(A^TA+\mu^2 I)
\end{equation}
and
\begin{equation}\label{cd4}
\|\widetilde L_{\mu,k}\|_F^2<(n-k)\mu^2<\|\mu I\|_F^2.
\end{equation}
\end{proposition}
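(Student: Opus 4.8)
The plan is to mimic the proof of Proposition~\ref{prop3.3}, exploiting the explicit diagonal form~(\ref{eval5}). First I would establish~(\ref{cd3}) by identifying the largest and smallest diagonal entries of $\Sigma^T\Sigma+\widetilde D_{\mu,k}^2$. Since $k=k_\mu$ is chosen precisely so that these entries are nonincreasing in the column index, the largest entry is $\sigma_1^2$ (in position~$1$) and the smallest \emph{positive} entry is $\frac{\sigma_1^2}{\sigma_1^2+\mu^2}(\sigma_n^2+\mu^2)$ (in position~$n$, assuming $\sigma_n>0$; if $A$ is rank-deficient one takes the last index $j$ with $\sigma_j>0$ and the formula still goes through because the constant $\frac{\sigma_1^2}{\sigma_1^2+\mu^2}$ is positive). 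Hence
\[
\kappa_2(A^TA+\widetilde L_{\mu,k}^T\widetilde L_{\mu,k})
=\frac{\sigma_1^2}{\frac{\sigma_1^2}{\sigma_1^2+\mu^2}(\sigma_n^2+\mu^2)}
=\frac{\sigma_1^2+\mu^2}{\sigma_n^2+\mu^2},
\]
which equals $\kappa_2(A^TA+\mu^2I)$ by~(\ref{condstd}), giving~(\ref{cd3}).

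Next I would prove the norm bound~(\ref{cd4}). As in Proposition~\ref{prop3.3}, orthogonality of $V$ gives $\|\widetilde L_{\mu,k}\|_F=\|\widetilde D_{\mu,k}\|_F$, so
\[
\|\widetilde L_{\mu,k}\|_F^2
=\frac{\mu^2}{\sigma_1^2+\mu^2}\sum_{i=k+1}^{n}(\sigma_1^2-\sigma_i^2).
\]
Each summand satisfies $\sigma_1^2-\sigma_i^2<\sigma_1^2<\sigma_1^2+\mu^2$, and there are $n-k$ of them, so the sum is strictly less than $(n-k)(\sigma_1^2+\mu^2)$; multiplying by $\frac{\mu^2}{\sigma_1^2+\mu^2}$ yields $\|\widetilde L_{\mu,k}\|_F^2<(n-k)\mu^2$. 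The trivial bound $(n-k)\mu^2\le n\mu^2=\|\mu I\|_F^2$ (strict when $k\ge1$) finishes~(\ref{cd4}).

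The only genuine subtlety — and the place I would be most careful — is the rank-deficient case and the precise location of the smallest positive diagonal entry of~(\ref{eval5}). For the first $k$ entries the values are the unchanged $\sigma_j^2$, and for $j>k$ they are $\frac{\sigma_1^2}{\sigma_1^2+\mu^2}(\sigma_j^2+\mu^2)$, which is strictly positive even when $\sigma_j=0$; since the choice of $k=k_\mu$ makes the whole list nonincreasing, the minimum positive value is attained at $j=n$ and equals $\frac{\sigma_1^2}{\sigma_1^2+\mu^2}\mu^2$ when $A$ is rank-deficient, or $\frac{\sigma_1^2}{\sigma_1^2+\mu^2}(\sigma_n^2+\mu^2)$ when $A$ has full rank. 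In either case this matches the smallest positive singular value of $A^TA+\mu^2I$ scaled so that the ratio to $\sigma_1^2$ is $\frac{\sigma_1^2+\mu^2}{\sigma_n^2+\mu^2}$ (with $\sigma_n=0$ in the deficient case), so~(\ref{cd3}) holds verbatim. Everything else is the routine computation already carried out for Proposition~\ref{prop3.3}.
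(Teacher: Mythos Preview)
Your proposal is correct and follows essentially the same route as the paper. The paper's proof is extremely terse: it simply says that (\ref{cd3}) follows from (\ref{eval5}), then writes the single chain
\[
\|\widetilde L_{\mu,k}\|_F^2=\|\widetilde D_{\mu,k}\|_F^2=
\frac{\mu^2}{\sigma_1^2+\mu^2}\sum_{i=k+1}^{n}(\sigma_1^2-\sigma_i^2)<(n-k)\mu^2<\|\mu I\|_F^2,
\]
invoking the choice $\sigma_{k+1}\le\mu<\sigma_k$ (really only $k\ge1$) for the last strict inequality. Your argument is the same computation spelled out in full, including the observation that the diagonal of (\ref{eval5}) is nonincreasing with first entry $\sigma_1^2$ and last entry $\frac{\sigma_1^2}{\sigma_1^2+\mu^2}(\sigma_n^2+\mu^2)$.

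Two very minor cosmetic points. First, your intermediate step $\sigma_1^2-\sigma_i^2<\sigma_1^2$ is not strict when $\sigma_i=0$; the strictness you need comes from the second step $\sigma_1^2<\sigma_1^2+\mu^2$, so the conclusion is unaffected. Second, your initial remark in the rank-deficient case (``take the last index $j$ with $\sigma_j>0$'') is not the right fix and you correctly supersede it in your final paragraph: since every entry of (\ref{eval5}) with index $>k$ equals $\frac{\sigma_1^2}{\sigma_1^2+\mu^2}(\sigma_j^2+\mu^2)>0$, the matrix is positive definite and the minimum is always at $j=n$, matching (\ref{condstd}) whether or not $\sigma_n=0$. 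You might streamline by dropping the provisional remark and going straight to that observation.
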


\begin{proof}
Property (\ref{cd3}) is a consequence of (\ref{eval5}), and (\ref{cd4}) follows from the
choice of $k$, i.e., $\sigma_{k+1}\leq\mu<\sigma_k$.

Indeed, the squared Frobenius norm of the regularization matrix defined by $\widetilde L_{\mu,k}$ in (\ref{Lnew5}) is less than or equal to 
that of the one defined by $\widetilde L_{\mu}$ in (\ref{Lnew4}), i.e. 
\[
\|\widetilde L_{\mu,k}\|_F^2=\|\widetilde D_{\mu,k}\|_F^2=
\frac{\mu^2}{\sigma_1^2+\mu^2}\sum_{i=k+1}^{n}(\sigma_1^2-\sigma_i^2)<(n-k)\mu^2<
\|\mu I\|_F^2.
\]
\end{proof}

We next compare the regularization matrices (\ref{Lnew}) and (\ref{Lnew5}). 

\begin{proposition}\label{pro3.1}
Let $L_{\mu}$ and  $\widetilde L_{\mu,k}$ be given by (\ref{Lnew}) and (\ref{Lnew5}), 
respectively. Assume that $k$ is such that 
\begin{equation}\label{k}
\sigma_k>\frac{\mu^2}{\sigma_{1}}\geq \sigma_{k+1}.
\end{equation} 
Then
\[
\|\widetilde L_{\mu,k}\|_F\leq\| L_{\mu}\|_F.
\]
\end{proposition}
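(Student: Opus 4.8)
The plan is to compute both squared Frobenius norms explicitly in terms of the singular values and the truncation indices, and then compare them term by term. Recall from the proof of Proposition~\ref{prop3.3} that
\[
\|L_{\mu}\|_F^2=\|D_\mu\|_F^2=\sum_{\sigma_j<\mu}(\mu^2-\sigma_j^2),
\]
i.e.\ the sum runs over those indices $j$ with $\sigma_j<\mu$, equivalently $j>k'$ where $k'$ is the index defined by $\sigma_{k'}>\mu\geq\sigma_{k'+1}$ in Subsection~\ref{sub2.3}. Similarly, from the proof of Proposition~\ref{prop3.4},
\[
\|\widetilde L_{\mu,k}\|_F^2=\|\widetilde D_{\mu,k}\|_F^2=\frac{\mu^2}{\sigma_1^2+\mu^2}\sum_{i=k+1}^{n}(\sigma_1^2-\sigma_i^2),
\]
where here $k=k_\mu$ is the index from (\ref{k}), namely $\sigma_k>\mu^2/\sigma_1\geq\sigma_{k+1}$. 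The first step is therefore to write both quantities in this closed form and to note that the two truncation indices need not coincide.

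The second step is to relate the two indices. Since $\mu^2/\sigma_1\leq\mu$ (because $\mu\leq\sigma_1$, which holds in the regime of interest), the threshold $\mu^2/\sigma_1$ is no larger than $\mu$, so $k=k_\mu\geq k'$; that is, $\widetilde L_{\mu,k}$ sums over a (weakly) smaller set of indices $\{k+1,\dots,n\}\subseteq\{k'+1,\dots,n\}$. The third step is the term-by-term comparison on the common range $i>k$: I claim that for each such $i$,
\[
\frac{\mu^2}{\sigma_1^2+\mu^2}(\sigma_1^2-\sigma_i^2)\;\leq\;\mu^2-\sigma_i^2.
\]
This is an elementary inequality: multiplying out, it is equivalent to $\mu^2(\sigma_1^2-\sigma_i^2)\leq(\sigma_1^2+\mu^2)(\mu^2-\sigma_i^2)$, i.e.\ to $0\leq\sigma_1^2\mu^2-\sigma_1^2\sigma_i^2+\mu^4-\mu^2\sigma_i^2-\mu^2\sigma_1^2+\mu^2\sigma_i^2=\mu^4-\sigma_1^2\sigma_i^2=(\mu^2-\sigma_1\sigma_i)(\mu^2+\sigma_1\sigma_i)$; and $\mu^2\geq\sigma_1\sigma_{k+1}\geq\sigma_1\sigma_i$ for $i\geq k+1$ by (\ref{k}), so the product is nonnegative. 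Summing this inequality over $i=k+1,\dots,n$ gives $\|\widetilde L_{\mu,k}\|_F^2\leq\sum_{i=k+1}^n(\mu^2-\sigma_i^2)$, and since $\{k+1,\dots,n\}\subseteq\{k'+1,\dots,n\}$ and each omitted term $\mu^2-\sigma_j^2$ (for $k'<j\leq k$) is nonnegative, the right-hand side is at most $\sum_{j=k'+1}^n(\mu^2-\sigma_j^2)=\|L_\mu\|_F^2$. This chains to the desired bound.

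The main obstacle is bookkeeping with the two distinct truncation indices rather than any analytic difficulty: one must be careful that the index $k$ appearing in $\widetilde L_{\mu,k}$ is the one defined by the threshold $\mu^2/\sigma_1$ in (\ref{k}), not the threshold $\mu$ used for $L_\mu$, and that the crucial per-term inequality uses precisely the hypothesis $\mu^2\geq\sigma_1\sigma_{k+1}$ from (\ref{k}) (so that $\mu^4\geq\sigma_1^2\sigma_i^2$ for all relevant $i$). Once the index relation $k\geq k'$ and this inequality are in hand, the proof is a one-line summation.
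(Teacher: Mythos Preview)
Your proof is correct and follows essentially the same route as the paper's: both establish the per-term inequality $\frac{\mu^2}{\sigma_1^2+\mu^2}(\sigma_1^2-\sigma_j^2)\leq\mu^2-\sigma_j^2$ for $j>k$ via $\mu^2\geq\sigma_1\sigma_j$, sum over $j=k+1,\dots,n$, and then use $\mu<\sigma_1$ (hence $k\geq k'$) to bound the result by $\|L_\mu\|_F^2$. Your write-up is slightly more explicit about the two truncation indices, but the argument is the same.
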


\begin{proof}
For any $j>k$, one has $\sigma_1\sigma_j\leq \mu^2$. Therefore, 
\[
\frac{\mu^2}{\sigma_1^2+\mu^2}(\sigma_1^2-\sigma_j^2)\leq\mu^2-\sigma_j^2,
\]
and it follows that 
\[
\|\widetilde L_{\mu,k}\|_F^2=\frac{\mu^2}{\sigma_1^2+\mu^2}
\sum_{j=k+1}^{n}(\sigma_1^2-\sigma_j^2)\leq \sum_{j=k+1}^n(\mu^2-\sigma_j^2).
\]
Assuming $\mu< \sigma_1$, so that $\mu^2< \mu\sigma_1$, we obtain 
\[
\sum_{j=k+1}^n(\mu^2-\sigma_j^2) \leq \sum_{\sigma_j^2<\mu^2}(\mu^2-\sigma_j^2),
\]
which concludes the proof.
\end{proof}

Note that the parameter $k$ such that (\ref{k}) is satisfied may differ from the parameter 
${\tilde k}$ such that $\sigma_{\tilde k} > \mu\geq \sigma_{\tilde k +1}$. Specifically,
$k\geq{\tilde k}$.

We also can establish the relations
\[
\|\widetilde L_{\mu,k}\|_F < \| L_{\mu,k}\|_F, \qquad
\|\widetilde L_{\mu,k}\|_F \leq \|\widetilde L_{\mu}\|_F,
\]
where the latter inequality is strict if $\sigma_1>\sigma_k$. Thus, the regularization 
matrix $\widetilde L_{\mu,k}$ yields normal equations with the same condition number
as the regularization matrix $\mu I$, but is of smaller norm than this and several other
regularization matrices considered. We therefore expect $\widetilde L_{\mu,k}$ to often 
yield more accurate approximations of the desired solution $\hat{\bx}$ than the other 
regularization matrices discussed above. That this is, indeed, the case is illustrated in 
Section \ref{sec3}.

The filter factors for Tikhonov regularization with the regularization matrix 
(\ref{Lnew5}) are given by
\[
\widetilde\varphi_{\mu,k,j}=\left\{\begin{array}{cc} 1,~~&~~1\leq j\leq k, \\
  \displaystyle{\frac{\sigma_j^2(\sigma_1^2+\mu^2)}
  {\sigma_1^2(\sigma_j^2+\mu^2)}},~~&~~k< j\leq\ell,\end{array}\right.
\]
i.e., they are same as $\varphi_{k,j}^{({\rm TSVD})}$ for $1\leq j\leq k$, and are close
to $\varphi_{\mu,j}^{({\rm Tikhonov})}$ for $k< j\leq\ell$.

The above analysis suggests that we introduce a parameter $\theta$ that allows us to 
interpolate between the regularization matrices (\ref{Lnew2}) and (\ref{Lnew5}). Thus,
define for $0\leq \theta \leq 1$ the regularization matrices
\begin{equation}\label{Ltheta}
L_{\mu,k}(\theta)=D_{\mu,k}(\theta)V^T
\end{equation}
with
\[
D_{\mu,k}^2(\theta)=\frac{\mu^2}{\sigma_1^2+\theta\mu^2}\,{\rm diag}\left[0,\dots, 0,\sigma_1^2-\theta\sigma_{k+1}^2,
\ldots,\sigma_1^2-\theta\sigma_n^2\right].
\]
Then 
\[
\Sigma^T\Sigma+ D_{\mu,k}^2(\theta)=
{\rm diag}\left[\sigma_1^2,\ldots, \sigma_k^2, \frac{\sigma_1^2}{\sigma_1^2+\theta\mu^2}(\sigma_{k+1}^2+\mu^2),\ldots,\frac{\sigma_1^2}{\sigma_1^2+\theta\mu^2}(\sigma_n^2+\mu^2)
\right]
\]
from which it follows that
\begin{eqnarray*}
\kappa_2(A^TA+L_{\mu,k}(\theta)^TL_{\mu,k}(\theta))&=&
 (1-\theta)\kappa_2(A^TA+L_{\mu,k}^TL_{\mu,k})+
 \theta \kappa_2(A^TA+\widetilde L_{\mu,k}^T\widetilde L_{\mu,k})\\
 &=& \frac{\sigma_1^2+\theta \mu^2}{\sigma_n^2+\mu^2}.
\end{eqnarray*}
Moreover,
\[
\|L_{\mu,k}(\theta)\|_F^2=\frac{\mu^2}{\sigma_1^2+\theta\mu^2}
\sum_{i=k+1} ^{n}(\sigma_1^2-\theta\sigma_i^2).
\]
Hence, the norm $\| L_{\mu,k}(\theta)\|_F^2$ is a nonincreasing function of $\theta$, whereas the condition number 
$\kappa_2(A^TA+L_{\mu,k}(\theta)^TL_{\mu,k}(\theta))$ is an increasing function of 
$\theta$.

The filter factors for Tikhonov regularization with the regularization matrix 
(\ref{Ltheta}) are given by
\[
\varphi_{\mu,k,j}(\theta)=(1-\theta)\varphi_{\mu,k,j}+\theta\widetilde \varphi_{\mu,k,j}=
\left\{\begin{array}{cc} 1,~~&~~1\leq j\leq k, \\
\displaystyle{\frac{\sigma_j^2(\sigma_1^2+\theta\mu^2)}{\sigma_1^2(\sigma_j^2+\mu^2)}},
~~&~~k< j\leq\ell, 
\end{array}\right.
\]
where $\varphi_{\mu,k,j}$ is defined by (\ref{phimukj}). Thus, the filter factors
$\varphi_{\mu,k,j}(\theta)$ agree with $\varphi_{k,j}^{({\rm TSVD})}$ for $1\leq j\leq k$,
and are close to $\varphi_{\mu,j}^{({\rm Tikhonov})}$ for $k<j\leq\ell$.

Numerical examples in the following section show the regularization matrices 
$L_{\mu,k}(1)=\widetilde L_{\mu,k}$ and $L_{\mu,k}(0)= L_{\mu,k}$ to yield the most
accurate approximations of $\hat{\bx}$. The former matrix has the smallest Frobenius norm
and the latter yields normal equations for Tikhonov regularization with the smallest 
condition number.

\section{Computed examples}\label{sec3}
The calculations of this section were carried out using MATLAB with relative accuracy  
$2.2\cdot 10^{-16}$. Most of the examples are obtained by discretizing Fredholm integral 
equations of the first kind 
\begin{equation}\label{fredholm}
    \int_a^b\! h(s,t) x(t) \,dt = g(s), \qquad c\leq s\leq d,
\end{equation}
with a smooth kernel $h$. The discretizations are carried out by Galerkin or 
Nystr\"om methods and yield linear discrete ill-posed problems (\ref{linsys}). MATLAB 
functions in Regularization Tools \cite{Ha2} determine discretizations 
$A\in{\R}^{m\times n}$ of the integral operators and scaled discrete approximations 
$\hat{\bx}\in{\R}^n$ of the solution $x$ of (\ref{fredholm}). In all examples, we let
$m=n=200$. The performance of the regularization matrices discussed in this paper is 
illustrated when the error $\be$ in $\bb$ is white Gaussian noise or colored noise. We 
begin with the former.

\subsection{Tests with white noise}
In the experiments of this subsection the error vector ${\be}\in{\R}^m$ has normally 
distributed random entries with zero mean. The vector is scaled to yield a specified noise
level $\|{\be}\|/\|\hat{\bb}\|$ and added to the error-free data vector 
$\hat{\bb}:=A\hat{\bx}$ to obtain the vector ${\bb}$ in (\ref{linsys}); cf.  (\ref{rhs}). 
In particular, $\|{\be}\|$ is available and we can apply the discrepancy principle with 
$\varepsilon=\|{\be}\|$ to determine the regularization parameter $\mu$ in Tikhonov 
regularization and the truncation index $k$ in TSVD. The parameter $\eta$ in 
(\ref{discrprinc}) and (\ref{discrprinc2}) is set to one. 

The computed approximation of $\hat{\bx}$ is denoted by ${\bx}_{\rm comp}$. We are 
interested in the relative error $\|{\bx}_{\rm comp}-\hat{\bx}\|/\|\hat{\bx}\|$ in the
computed solutions determined by Tikhonov regularization with the different regularization 
matrices described, and by TSVD. The difference ${\bx}_{\rm comp}-\hat{\bx}$ depends on 
the entries of the error vector ${\be}$. We report for every example the average of the 
relative errors in ${\bx}_{\rm comp}$ over $1000$ runs for each noise level. 

\begin{table}[htb!]
\centering
\begin{tabular}{cccccc}\hline
Noise level & \multicolumn{3}{c}{Tikhonov regularization} & TSVD\\
\% & $L$ in (\ref{Lnew}) & $L=\mu I$ &  $L$ in (\ref{Lnew2})\\
\hline
$10.0$ & $6.70 \cdot 10^{-2} $ & $6.83 \cdot 10^{-2}$ & 
${\bf 6.32} \cdot 10^{-2}$&$7.86\cdot 10^{-2}$\\
$\phantom{1}1.0$ & $2.72 \cdot 10^{-2} $ & $2.62 \cdot 10^{-2} $ & 
  $2.62 \cdot 10^{-2}$&${\bf 2.57} \cdot 10^{-2}$\\
$\phantom{1}0.5$ & $2.17 \cdot 10^{-2} $ & $2.08 \cdot 10^{-2} $ & 
  ${\bf 2.07} \cdot 10^{-2}$&$2.47 \cdot 10^{-2}$\\
  $\phantom{1}0.1$ & $1.08 \cdot 10^{-2} $ & $1.11 \cdot 10^{-2}$ & 
  ${\bf 1.03} \cdot 10^{-2}$&$1.23 \cdot 10^{-2}$\\

\hline
\end{tabular}
\caption{Example 3.1: Average relative errors in the computed solutions for the 
{\sf phillips} test problem for several noise levels.}\label{table:phillips}
\end{table}

Example 3.1. We first consider the problem {\sf phillips} from \cite{Ha2}. Let
\[
  \phi(t) = \left\{\begin{array}{lr}
              1 + \cos(\frac{\pi t}{3}), & |t| < 3, \\
             0,                          & |t| \geq 3,
             \end{array} \right. 
\]
and $a=c=-6$, $b=d=6$. The kernel, right-hand side function, and solution of the integral 
equation (\ref{fredholm}) are given by 
\[
    h(s,t) = \phi(s - t),~~
    x(t) = \phi(t),~~
    g(s)  = (6 - |s|)\left(1 + \frac{1}{2}
    \cos\left(\frac{\pi s}{3}\right)\right) + 
    \frac{9}{2 \pi} \sin\left(\frac{\pi |s|}{3}\right).
\]
Table \ref{table:phillips} displays the averages of the relative errors in the computed 
solutions over $1000$ runs for each noise level. The smallest average relative error is 
for each noise level marked in boldface. Tikhonov regularization with the regularization
matrix (\ref{Lnew2}) is seen to yield the same or smaller average errors as Tikhonov
regularization with the regularization matrices (\ref{Lnew}) and $\mu I$. The only average
error that is smaller than for Tikhonov regularization with the matrix (\ref{Lnew2}) is
obtained for $1\%$ noise by the TSVD method. We conclude that the regularization matrix 
(\ref{Lnew2}) yields competitive results and, in particular, determines more accurate
approximations of $\hat{\bx}$ than standard Tikhonov regularization (\ref{tikhonov}).
$\Box$

\begin{table}[htb!]
\centering
\begin{tabular}{cccccc}\hline
Noise level & \multicolumn{3}{c}{Tikhonov regularization} & TSVD\\
\% & $L$ in (\ref{Lnew}) & $L=\mu I$ &  $L$ in (\ref{Lnew2})\\
\hline
$10.0$ & ${\bf  1.69} \cdot 10^{-1} $ & $1.76 \cdot 10^{-1}$ & $1.70 \cdot 10^{-1}$&$1.86\cdot 10^{-1}$\\
$\phantom{1}1.0$ & ${\bf  1.02} \cdot 10^{-1} $ & $1.13 \cdot 10^{-1} $ & 
  $1.11 \cdot 10^{-1}$&$1.30\cdot 10^{-1}$\\
$\phantom{1}0.5$ & ${\bf  6.76} \cdot 10^{-2} $ & $8.35 \cdot 10^{-2} $ & 
  $7.53 \cdot 10^{-2}$&$7.86 \cdot 10^{-2}$\\
  $\phantom{1}0.1$ & $4.83 \cdot 10^{-2} $ & $5.03 \cdot 10^{-2}  $ & 
  ${\bf  4.80} \cdot 10^{-2}$&$4.83\cdot 10^{-2}$\\
\hline
\end{tabular}
\caption{Example 3.2: Average relative errors in the computed solutions for the
{\sf shaw} test problem for several noise levels.}\label{table:shaw}
\end{table}

Example 3.2. The test problem {\sf shaw} from \cite{Ha2} is an integral equation 
(\ref{fredholm}) with kernel and solution
\begin{eqnarray*}
h(s,t)&=& (\cos(s) + \cos(t))^2 \left(\frac{\sin(u)}{u}\right)^2, \quad
    u = \pi (\sin(s) + \sin(t)), \\
x(t)&=& 2\exp\left(-6\left( t-\frac{4}{5}\right)^2\right) + 
\exp\left(-2 \left(t + \frac{1}{2}\right)^2\right),
\end{eqnarray*}
and parameters $a=c=-\pi/2$, $b=d=\pi/2$. Table \ref{table:shaw} is analogous to Table 
\ref{table:phillips}; it displays the averages of the relative errors in the computed solutions over 
$1000$ runs for each noise level. The regularization parameter $\mu$ for Tikhonov 
regularization and the truncation index $k$ for TSVD are determined with the aid of the 
discrepancy principle. The smallest entry in each row is in boldface. The regularization
matrices (\ref{Lnew}) and (\ref{Lnew2}) can be seen to perform the best.

Table \ref{table:optimal} compares the performance of the methods when the optimal values
of the regularization parameter $\mu$ in Tikhonov regularization is used, i.e., we use the
values that give the most accurate approximations of $\hat{\bx}$. These values of $\mu$ 
are generally not available when solving discrete ill-posed problems. Nevertheless, it is 
interesting to see how the regularization matrices would perform if the optimal values of 
$\mu$ were available. The table shows, in increasing order, the average relative errors 
over $1000$ runs in the computed approximate solutions determined by Tikhonov 
regularization for the noise level $0.1\%$. All the modifications (\ref{Lnew}), 
(\ref{Lnew2}), and (\ref{Lnew5}) give approximate solutions of higher quality than 
$L=\mu I$. For the sake of completeness, we also report the average of the relative errors
in the computed solutions obtained with TSVD when the truncation index $k$ is chosen to give
the most accurate approximation of $\hat{\bx}$. It is $4.4777146 \cdot 10^{-2}$, which is
slightly larger than the average errors reported in Table \ref{table:optimal}. $\Box$

\begin{table}[htb!]
\centering
\begin{tabular}{ccccc}\hline
$L$ in (\ref{Lnew2})&  $L$ in (\ref{Lnew5})&$L$ in (\ref{Lnew})  & $L=\mu I$\\
\hline
$4.3750446\cdot 10^{-2}$&$4.3750452\cdot 10^{-2}$ & $4.3855830 \cdot 10^{-2} $ & $4.4713012\cdot 10^{-2}$\\ 
\hline
\end{tabular}
\caption{Example 3.2: Average relative errors in the computed solutions for the {\sf shaw}
test problem for noise level $0.1\%$ with optimal regularization parameters $\mu$ and $k$.
}\label{table:optimal}
\end{table}

Example 3.3. Consider the problem {\sf heat} from \cite{Ha2}. It is a discretization of a 
Volterra integral equation of the first kind on the interval $[0, 1]$ with a convolution 
kernel. Table \ref{table:heat} shows the average relative errors in the computed solutions
determined by Tikhonov regularization and TSVD over $1000$ runs for each noise level. The
regularization matrices (\ref{Lnew}) and (\ref{Lnew2}) are seen to yield the smallest 
average relative errors.  $\Box$

\begin{table}[htb!]
\centering
\begin{tabular}{cccccc}\hline
Noise level & \multicolumn{3}{c}{Tikhonov regularization} & TSVD\\
\% & $L$ in (\ref{Lnew}) & $L=\mu I$ &  $L$ in (\ref{Lnew2})\\
\hline
$10.0$ & $ 2.61 \cdot 10^{-1} $ & $2.88 \cdot 10^{-1}$ & $ {\bf 2.59} \cdot 10^{-1}$&$3.04\cdot 10^{-1}$\\
$\phantom{1}1.0$ & $ 9.95\cdot 10^{-2} $ & $1.08 \cdot 10^{-1} $ & 
  ${\bf 9.78}\cdot 10^{-2}$&$1.20\cdot 10^{-1}$\\
$\phantom{1}0.5$ & ${\bf 7.17} \cdot 10^{-2} $ & $7.75 \cdot 10^{-2} $ & 
  $7.21 \cdot 10^{-2}$&$9.67 \cdot 10^{-2}$\\
  $\phantom{1}0.1$ & $ 3.50 \cdot 10^{-2} $ & $3.67 \cdot 10^{-2}  $ & 
  $ {\bf 3.43} \cdot 10^{-2}$&$4.61\cdot 10^{-2}$\\
\hline
\end{tabular}
\caption{Example 3.3: Average relative errors in the computed solutions for the
{\sf heat} test problem for several noise levels.}\label{table:heat}
\end{table}

\subsection{Tests with colored noise}
In this subsection, we consider noise whose power density increases with the frequency, 
i.e., the noise has more energy in the high frequencies than white Gaussian noise. This 
kind of noise is known as ``colored noise'' and is sometimes referred to as ``violet noise'';
see, e.g., Hansen \cite{Ha3} for a discussion of colored noise in discrete ill-posed problems.
Let $U$ be the orthogonal matrix of left singular vectors of the matrix $A$ in 
(\ref{linsys}).  Hansen \cite[p.~74]{Ha3} generates colored noise with the MATLAB command
\begin{equation}\label{colnoise}
\texttt{e=U*(logspace(-alpha,0,200)'.*(U'*randn(200,1)));}
\end{equation}
Here \texttt{randn(200,1)} yields a vector in $\R^{200}$ with normally distributed random
entries and the parameter $\alpha=\texttt{alpha}$ determines how much the energy in the
high frequencies dominate; they dominate more the larger $\alpha>0$. We add the vector
$\be=\texttt{e}$ to the noise-free data vector $\hat{\bb}$ to obtain the 
noise-contaminated data vector $\bb$; cf. (\ref{rhs}). When the covariance matrix for the
noise is known, then its Cholesky factorization can be used to prewhitening the noise; see
\cite[p.~76]{Ha3}. We assume the covariance matrix not to be available and would 
like to illustrate how the methods considered in this paper perform in this situation. The
vector ${\be}$ is scaled to yield a specified noise level $\|{\be}\|/\|\hat{\bb}\|$ and we
use the discrepancy principle to determine the regularization parameters in Tikhonov 
regularization and TSVD with $\eta=1$ in (\ref{discrprinc}) and (\ref{discrprinc2}).
We also will replace the matrix $U$ in (\ref{colnoise}) by other orthogonal matrices. 

Example 3.4. Consider the integral equation of the first kind (\ref{fredholm}) with the 
kernel and right-hand side function given by
\begin{eqnarray*}
 h(s,t) &=& \left\{\begin{array}{lr}
 s \, (t-1), & \ s<t, \\
 t \, (s-1), & \ s\geq t,
 \end{array}
 \right. 
\end{eqnarray*}
and
\begin{eqnarray*} 
g(s) &=& \left\{\begin{array}{lr}
 (4\, s^3 - 3\, s)/24,  &  s <  0.5, \\
 (-4\, s^3 + 12\, s^2 - 9\, s + 1)/24, &  s \geq  0.5. \\
 \end{array}
 \right.
\end{eqnarray*}
We use the MATLAB function {\sf deriv2} from \cite{Ha2} to determine a discretization
$A\in \R^{200 \times 200}$ of the integral operator, and a scaled discrete approximation
$\hat{\bx}$ of the solution 
\begin{eqnarray*} 
x(t) &=& \left\{\begin{array}{lr}
 t,  &  t <  0.5, \\
1-t, &  t \geq  0.5. \\
\end{array}
\right.
\end{eqnarray*}
We compute the noise-free data vector $\hat{\bb}:=A\hat{\bx}$ to which we add the 
noise-vector $\be$. The latter is generated by (\ref{colnoise}) with $\alpha=1$ followed 
by scaling. 

Table \ref{table:deriv2} displays the averages of the relative errors in the computed 
solutions over $1000$ runs for each noise level. Tikhonov regularization with the 
regularization matrix (\ref{Lnew2}) is seen to yield the smallest average errors for all 
noise levels. Table \ref{table:deriv2b} is obtained by replacing the orthogonal matrix $U$
of left singular vectors in (\ref{colnoise}) by a random orthogonal matrix, and for the
results of Table \ref{table:deriv2c} this matrix is replaced by the orthogonal cosine
transform matrix. The regularization matrix (\ref{Lnew2}) is seen to perform well in 
each one of these tables. $\Box$

\begin{table}[htb!]
\centering
\begin{tabular}{cccccc}\hline
Noise level & \multicolumn{3}{c}{Tikhonov regularization} & TSVD\\
\% & $L$ in (\ref{Lnew}) & $L=\mu I$ &  $L$ in (\ref{Lnew2})\\
\hline
$\phantom{1}1.0$ & ${2.31} \cdot 10^{-2} $ & $2.25 \cdot 10^{-2} $ & 
  ${\bf 2.16} \cdot 10^{-1}$&$2.34\cdot 10^{-1}$\\
$\phantom{1}0.5$ & ${1.81} \cdot 10^{-2} $ & $1.76 \cdot 10^{-2} $ & 
  ${\bf  1.72} \cdot 10^{-2}$&$1.81\cdot 10^{-2}$\\
  $\phantom{1}0.1$ & $1.01\cdot 10^{-2} $ & $9.86 \cdot 10^{-3}  $ & 
  ${\bf 9.62} \cdot 10^{-3}$&$1.03\cdot 10^{-2}$\\
\hline
\end{tabular}
\caption{Example 3.4: Average relative errors in the computed solutions for the 
{\sf deriv2} test problem for several noise levels. Moderate violet noise 
($\alpha=1$).}\label{table:deriv2}
\end{table}

\begin{table}[htb!]
\centering
\begin{tabular}{cccccc}\hline
Noise level & \multicolumn{3}{c}{Tikhonov regularization} & TSVD\\
\% & $L$ in (\ref{Lnew}) & $L=\mu I$ &  $L$ in (\ref{Lnew2})\\
\hline
$\phantom{1}1.0$ & ${3.90} \cdot 10^{-2} $ & $3.78 \cdot 10^{-2} $ & 
  ${\bf 3.65} \cdot 10^{-1}$&$4.07\cdot 10^{-1}$\\
$\phantom{1}0.5$ & ${3.05} \cdot 10^{-2} $ & $2.96 \cdot 10^{-2} $ & 
  ${\bf  2.92} \cdot 10^{-2}$&$3.02\cdot 10^{-2}$\\
  $\phantom{1}0.1$ & $1.65\cdot 10^{-2} $ & $1.62 \cdot 10^{-3}  $ & 
  ${\bf 1.56} \cdot 10^{-3}$&$1.74\cdot 10^{-2}$\\
\hline
\end{tabular}
\caption{Example 3.4: Average relative errors in the computed solutions for the
{\sf deriv2} test problem for several noise levels. $U$ in (\ref{colnoise}) is an 
orthogonal random matrix. Violet noise ($\alpha=2$).}\label{table:deriv2b}
\end{table}

\begin{table}[htb!]
\centering
\begin{tabular}{cccccc}\hline
Noise level & \multicolumn{3}{c}{Tikhonov regularization} & TSVD\\
\% & $L$ in (\ref{Lnew}) & $L=\mu I$ &  $L$ in (\ref{Lnew2})\\
\hline
$\phantom{1}1.0$ & ${2.32} \cdot 10^{-2} $ & $2.25 \cdot 10^{-2} $ & 
  ${\bf 2.16} \cdot 10^{-1}$&$2.34\cdot 10^{-1}$\\
$\phantom{1}0.5$ & ${1.81} \cdot 10^{-2} $ & $1.76 \cdot 10^{-2} $ & 
  ${\bf  1.72} \cdot 10^{-2}$&$1.80\cdot 10^{-2}$\\
  $\phantom{1}0.1$ & $1.02\cdot 10^{-2} $ & $9.90 \cdot 10^{-3}  $ & 
  ${\bf 9.64} \cdot 10^{-3}$&$1.03\cdot 10^{-2}$\\
\hline
\end{tabular}
\caption{Example 3.4: Average relative errors in the computed solutions for the
{\sf deriv2} test problem for several noise levels. $U$ in (\ref{colnoise}) is a
orthogonal cosine transform matrix. Moderate violet noise ($\alpha=1$).}\label{table:deriv2c}
\end{table}

Example 3.5. Consider again the test problem {\sf heat} from \cite{Ha2}. Tables
\ref{table:heat_v1} and \ref{table:heat2} are analogous to Tables \ref{table:deriv2} and 
\ref{table:deriv2b}, respectively. Tikhonov regularization with the regularization matrix 
(\ref{Lnew2}) is seen to perform well.  $\Box$

\begin{table}[htb!]
\centering
\begin{tabular}{cccccc}\hline
Noise level & \multicolumn{3}{c}{Tikhonov regularization} & TSVD\\
\% & $L$ in (\ref{Lnew}) & $L=\mu I$ &  $L$ in (\ref{Lnew2})-(\ref{Lnew5})\\
\hline
$\phantom{1}1.0$ & ${5.78} \cdot 10^{-2} $ & $5.92 \cdot 10^{-2} $ & 
  ${\bf 5.40} \cdot 10^{-2}$&$6.76\cdot 10^{-2}$\\
$\phantom{1}0.5$ & ${4.34} \cdot 10^{-2} $ & $4.36 \cdot 10^{-2} $ & 
  ${\bf  4.21} \cdot 10^{-2}$&$4.95 \cdot 10^{-2}$\\
  $\phantom{1}0.1$ & $2.48 \cdot 10^{-2} $ & ${\bf 2.29} \cdot 10^{-2}  $ & 
  ${ 2.30} \cdot 10^{-2}$&$2.34\cdot 10^{-2}$\\
\hline
\end{tabular}
\caption{Example 3.5: Average relative errors in the computed solutions for the {\sf heat}
test problem for several noise levels. Moderate violet noise ($\alpha=1$).}
\label{table:heat_v1}
\end{table}

\begin{table}[htb!]
\centering
\begin{tabular}{cccccc}\hline
Noise level & \multicolumn{3}{c}{Tikhonov regularization} & TSVD\\
\% & $L$ in (\ref{Lnew}) & $L=\mu I$ &  $L$ in (\ref{Lnew2})-(\ref{Lnew5})\\
\hline 
$\phantom{1}1.0$ & ${9.76} \cdot 10^{-2} $ & ${  1.06} \cdot 10^{-1} $ & 
  ${\bf 9.67} \cdot 10^{-2}$&$1.18\cdot 10^{-1}$\\
$\phantom{1}0.5$ & ${\bf 7.14} \cdot 10^{-2} $ & $7.73 \cdot 10^{-1} $ & 
  ${  7.18} \cdot 10^{-2}$&$9.68\cdot 10^{-1}$\\
  $\phantom{1}0.1$ & ${ 3.50}\cdot 10^{-2} $ & $3.70 \cdot 10^{-2}  $ & 
  ${ \bf 3.44} \cdot 10^{-2}$&$4.61\cdot 10^{-2}$\\
\hline
\end{tabular}
\caption{Example 3.5: Average relative errors in the computed solutions for the {\sf heat} 
test problem for several noise levels. The matrix $U$ in (\ref{colnoise}) is an orthogonal
random matrix. Violet noise ($\alpha=2$).}\label{table:heat2}
\end{table}

\section{Conclusion and extension}\label{sec4}
Tikhonov regularization suggests several matrix nearness problems for determining
regularization matrices. Regularization matrices so defined can give approximate solutions
of higher quality than both Tikhonov regularization (\ref{tikhonov}) with regularization 
matrix $\mu I$ and the TSVD method. The computational effort is dominated by the 
computation of the SVD (\ref{svd}) of the given matrix $A$ in (\ref{linsys}) and, 
consequently, is essentially the same for all methods considered in this paper. The new 
regularization matrices are attractive both when the noise $\be$ is white Gaussian or 
violet.

For ease of description of the methods, we assumed the SVD of $A$ to be available. This
requirement can be removed. A least-squares problem (\ref{linsys}) with a matrix too 
large to compute its SVD can be reduced to small a problem by a Krylov subspace method.
The methods of the present paper can be applied to the reduced problem so obtained. 
Reduction methods include partial Golub--Kahan bidiagonalization and partial Arnoldi
decomposition; see, e.g., \cite{Bj,CGR,NRS,Sa} for illustrations of application of these
reduction methods.

We also note that the methods of this paper can be applied to Tikhonov regularization
problems (\ref{tikhonov}) with a more general regularization matrix than $\mu I$ by
first transforming the more general problem to the form (\ref{tikhonov}). Transformation
methods are discussed in \cite[Sections 2.3.1 and 2.3.2]{Ha1} and \cite{RY}.

We used the discrepancy principle to determine the amount of regularization in all
computed examples. However, the regularization methods described also can be applied in 
conjunction with parameter choice rules that do not require a bound for $\|{\be}\|$ to be
known. Many such parameter choice rules are discussed and analyzed in
\cite{BRRS,BRS,Ha1,Ha3,Ki,Ki2,RR} and in references therein.

\section*{Acknowledgement}
We would like to thank a referee for comments that improved the presentation.

\end{document}